\newtheorem{theorem}{Theorem}[section]
\newtheorem{proposition}[theorem]{Proposition}
\newtheorem{lemma}[theorem]{Lemma}
\newtheorem{definition}[theorem]{Definition}
\theoremstyle{remark}
\def \1{\mathbb {1}}
\def \RM{\mathbb {R}}
\def \NM{\mathbb{N}}
\def \ZM{\mathbb{Z}}
\def \CM{\mathbb{C}}
\def \Der {{\rm Der\,}}
\def \p {{\rm exp\,}}
\def \d{\partial}
\def\dt{\delta} 
\def\a{\alpha}
\def\b{\beta}
\def\e{\varepsilon}  
\def\g{\gamma}
\def\l{\lambda}
\def\p{\varphi}
\def \s{\sigma}
\def \to{\longrightarrow} 
\def \w{\wedge}
\def \< {{\langle }}
\def \> {{\rangle }}
\def \( {\left( }
\def \) {\right) }
\newcommand{\Bt}{{\mathcal B}}
\newcommand{\Ot}{{\mathcal O}}
\title[Fixed point theorems]{Fixed point theorems \\ for small denominators problems}
\author{Mauricio GARAY \MakeLowercase{ and} Duco van Straten.}
\begin{document}


\begin{abstract}
In the seventies, Zehnder established a Nash-Moser type implicit function theorem in the analytic setting. This theorem has found many applications in dynamical systems, although its use typically requires substantial technical effort. We further develop  the analytic theory and give fixed point theorems with direct applications to dynamical systems. \end{abstract}
\maketitle
 
\section*{Introduction}
The aim of this paper is to establish fixed point theorems for Banach scales of holomorphic functions, thereby providing tools tailored for direct applications to small divisor problems, in particular to KAM theory.

 In applications of functional analysis, there is a strong emphasis on Sobolev or H\"older spaces
while Banach spaces of holomorphic functions usually do not play a central role. However, Zehnder 
had already given  Nash-Moser implicit function theorems in the 
analytic setting~\cite{Zehnder1974implicit,Zehnder_implicit}. Although this leaves out the more general case of smooth functions, for applications in celestial mechanics and dynamical systems this restriction to analytic functions does not really limit practical applicability. 

The basic idea, used by Zehnder, to implement the Newton iteration in a Banach scale can be traced back to Nagumo's 
proof of the Cauchy-Kovalevskaya theorem~\cite{Nagumo}. One considers the Banach scale  $\Ot^b(D_s)$  of bounded functions holomorphic in a disc of radius $s$ with the supremum norm. This technique was rediscovered by Kolmogorov and is now widely used in dynamical systems~\cite{Kolmogorov_KAM}. 

In the analytic category one does not need 
to consider smoothing operators in order to implement Newton-type iterations. For instance, the maps 
$$\Ot^b(D_{s_n}) \to \Ot^b(D_{s_{n+1}}),\ f \mapsto f'$$ 
may be iterated indefinitely by choosing any positive decreasing sequence $(s_n)$ converging to a positive limit.   

 Zehnder already noted in his first paper that the situation is much simpler in the analytic case: { \em  The proof of this theorem is elementary; the real difficulty, however, lies in showing that the
assumptions can be met in the relevant applications \cite{Zehnder1974implicit}.}

 Indeed, applying Zehnder’s implicit function theorems remains a real challenge.  Zehnder  was able to prove a normal form theorem for almost
 linear vector fields on the torus, but it took 12 years before Bost and Herman gave a proof of the weak KAM theorem--the Kolmogorov invariant
 torus theorem--using implicit function theorems~\cite{Bost_KAM}.   The Bost-Herman proof was lengthy and delicate but in~\cite{Fejoz_KAM}, Féjoz gave a proof, in the analytic category,  based on an elementary fixed point theorem in the spirit of Zehnder's original work.  More recently, it has been shown by Alazard and Shao that the Kolmogorov invariant torus theorem can be derived from the standard Banach fixed point theorem~\cite{Alazard_Shao}.

 As far as we know, there is no complete proof of the strong KAM theorem in the $C^\infty$ case, that is, of the existence of a positive measure set of invariant tori in almost integrable systems as a direct application of implicit function theorems. In fact, one can hardly qualify the Bost-Herman proof as a direct application either.

 Note that in the literature {\em KAM theorem} may refer to three different results which in increasing order of difficulty are: {\em normal forms of quasi-linear vector fields on the torus}---the baby model--- {\em the invariant torus theorem}---the weak KAM theorem--- {\em the existence of a positive measure set of invariant tori}---the strong KAM theorem---, which we simply call the KAM theorem.
   
  The theorems that we will present are all based on the idea of {\em scalar models} where every infinite dimensional iteration is reduced to studying a simple real sequence. Namely, if $(x_n)$ is a sequence such that  $x_n \in E_n$, where $(E_n,|\cdot|_n)$ is an increasing sequence of Banach subspaces of a locally convex space $E$
  then we replace the relation $$|x_{n+1}-x_{n}|_{n+1} \leq |f(x_{n})-f(x_{n-1})|_n,\ x_{n+1}=f(x_n) $$ by a scalar model
  $$u_{n+1}=g(u_n) \text{ where } |f(x_{n})-f(x_{n-1})|_n \leq g( u_n),\ u_n=|x_n-x_{n-1}|_n  .$$
  If the sequence $(u_n)$ happens to be summable then the iteration will converge. This is reminiscent of Cauchy's majorant method.
   
  The structure of the paper is as follows.\\
In §1, we start by recalling Hamilton's inverse function theorem and explain some of its limitations~\cite{Hamilton_implicit} (see also~\cite{Sergeraert}). We then describe what type of small denominators one should expect. These are more general than the ones considered in the papers of Zehnder~\cite{Zehnder1974implicit,Zehnder_implicit}. We then show the convergence of the Newton method in an abstract context and deduce an analytic variant of Hamilton's inverse function theorem~(Theorem~\ref{T::Hamilton}).
 Finally, we recover a slight generalisation of Zehnder's implicit function theorem~\cite{Zehnder1974implicit}. The proofs are based on the convergence of the Newton-type scalar model $u_{n+1}=a_n u_n^2$ where $(a_n)$ is a Bruno sequence~(\ref{P::Bruno}). The same model can be used for the weak KAM theorem.

In §2, we aim at establishing theorems with direct applications to the local theory of dynamical systems such as the Bruno--Siegel linearisation theorem for vector fields, the general versal deformation theorem for vector fields or the proof of the Herman invariant tori conjecture~\cite{Bruno,Versal_fields,Herman_conjecture,Siegel_vecteurs}. This type of theorem involves an increasing filtration by the order of perturbation, as in the ordinary Morse lemma, during the iteration. So we start by examining the Morse lemma in our context and then establish a variant of the Banach fixed point theorem adapted to perturbation theory. Surprisingly enough, in the perturbative context, the quadraticity of Newton-type iteration is not needed. The scalar model  for the perturbative case is simply $u_{n+1}=b_n u_n$, where $b$ is a decreasing Bruno sequence and therefore the convergence
is straightforward. Although elementary, we were ignorant of this fact for a long time.

In §3, we consider KAM theory in the spirit of Arnold: contrary to classical perturbation theory, there is in this case no filtration preserved by the iteration~\cite{Arnold_KAM}. To explain this fact---which is often neglected---we consider a baby example. We then prove a fixed point theorem which we used in our proof of the KAM theorem for symplectic tori~\cite{Symplectic_torus}. It can be used to simplify the proof of the strong KAM theorem as well. In this case, small denominators are compensated by the quadraticity of the Newton iteration but the approximation by truncation of the harmonics involves
a new factor. Our finite dimensional model mixes both the Newton iteration and the perturbative case: it is of the form $u_{n+1}=a_n u_n^2+b_nu_n$ where $(a_n)$ and $(b_n)$ are respectively increasing and decreasing sequences. In practice, assuming Bruno conditions on the frequencies,
our condition is satisfied. Our theorem shows that the KAM theorem
holds under Bruno conditions, whereas in~\cite{Symplectic_torus}
we assumed a stronger condition. 

\section{An analytic inverse function theorem}
We start by discussing and proving an analytic variant of the Hamilton inverse function theorem. We will see that the analytic case turns out to be much simpler than the $C^\infty$ one.
\subsection{The Hamilton inverse function theorem}

\begin{theorem}[\cite{Hamilton_implicit}]
Let $E$ and $F$ be tame Fréchet spaces, let $U \subset E$ be an open subset, and let $f \colon U \to F$ be a smooth tame map.
Suppose that the equation for the derivative $Df(x) h = k$
has a unique solution $h = Vf(x) k$ for all $x \in U$ and all $k \in F$, and that the family of inverses $Vf \colon U \times F \to E$ is a smooth tame map. Then $f$ is locally invertible, and each local inverse $f^{-1}$ is a smooth tame map.

\end{theorem}
We will not recall the definition of tame Fr\'echet space,  as we consider only the case $E=F=C^\infty([0,1],\RM)$ with $C^k$ norms: 
$$\| \p \|_k=\max_{|j| \leq k}\sup_{x \in [0,1]}|\d^j \p(x)| .$$ That a map $f$ is tame means that there exists for any $n \geq 0$
 constants $C_n$ such that for any $\p \in E$:
 $$(*)\ \|f(\p)\|_n \leq C_n(1+\| \p \|_{n+r}). $$
 It is a way of saying that there is a loss of regularity of $r$-derivatives when we apply $f$. For linear operators tameness is defined by
  $$ \|L(\p)\|_n \leq C_n(\| \p \|_{n+r}). $$
  For instance, a linear operator of order $r$ satisfies this property.
 Tameness for the family of inverses means that
 $$ \| Vf(\p) k \|_n \leq C_n(1+\| \p \|_{n+r})\| k \|_{n+r}$$
 and smooth tameness means that all derivatives of the map are also tame.

 \subsection{The analytic case}

 Let $E=\CM\{ z \}$ be the space of convergent power series in one variable.
  Let $E_t$  be the Banach space of  $L^2$ holomorphic function inside the disk 
 $D_t= \{ z \in \CM: | z | < t \} $:
 $$| f |_t= \sqrt{\int_{D_t} |f(z)|^2 dx \w dy },\ z=x+iy.$$
 The family $(E_t)$ forms a Banach scale that is a decreasing family of Banach spaces parametrised by $\RM_{>0}$: $E_t \subset E_s$
 for $t>s$. The space $E$ is then the direct limit of the Banach spaces $E_t$ (see for instance~\cite{Grothendieck_EVT}).

 \subsection{What loss of regularity do we expect?}
 The above example is particularly simple but, in general, solving linearised equations will exhibit loss of regularity in the analytic case as well.
 There are mainly two reasons for loosing regularity: differentiation and division. Say if $\p$ is a continuous function vanishing at the origin then $\p(x)/x$ might be non-continuous~(for instance if
  $\p(x)=|x|$). 

In the analytic case,
 the Cauchy integral formula implies the {\em Cauchy} inequalities:
$$\sup_{|z|=s}|f^{(k)}(z)| \leq \frac{k!}{(t-s)^k}\sup_{|z|=t }|f(z)| $$
which shows the loss of regularity given by differentiation. 
While for the division process we have, by the maximum principle (assuming the quotient to be holomorphic):
$$\sup_{|z| \leq t}\left|\frac{f(z)}{z^k}\right| \leq \frac{1}{t^k}\sup_{|z|=t }|f(z)| .$$
 As a general rule, loss of regularity, in the analytic context, involves factors of the form $s^\a t^\b(t-s)^\g$, $\a,\b,\g \geq 0$  in the denominator.

%
 \subsection{Local maps}
 We may now give the analog of Hamilton's tameness condition in the analytic context.
An decreasing Banach chain $(E_t), t \in \RM_{>0}$ of a locally convex space $E$ is called {\em Kolmogorov} if the inclusions $E_t \subset E_s$ have norm $<1$ and $E=\bigcup_t E_t$. We abuse notations and use indifferently the notation $E$ for the chain and for the locally convex space.
We also extend the norm of $E_t$ to $E$ by putting it equal to $\infty$ for elements which are not in $E_t$.
\begin{definition} A map $f:E \to F$ between Kolmogorov chains is called {\em local} if $f(E_t) \subset F_s$ for any $s<t$ and if
 there exists a function, called a {\em local factor}, of the form $M(s,t)=Cs^{-\a} (t-s)^{-\b}$, $\a,\b \geq 0$, such that
 $|f(x)|_s \leq M(s,t)(1+|x|_t) $ for any $x \in E_t$. 
 \end{definition}
The notion applies also for linear map for which locality is defined by $|L(\xi)|_s \leq M(s,t)|\xi|_t $ for any $x \in E_t$. If a map $(x,\xi) \mapsto f(x,\xi)$ is non linear in $x$ and linear in $\xi$ then locality means that
$$|f(x,\xi)|_s  \leq  M(s,t)(1+|x|_t)|\xi|_t$$

\subsection{The classical Newton iteration}
Before going further, let us recall  the Newton iteration
for one variable functions $f:\RM \to \RM$. It is defined by
$$x_{n+1}=x_n-\frac{f(x_n)}{f'(x_n)} $$
and if we assume that
\begin{enumerate}[{\rm 1)}]
\item $x_0$ is close enough to a solution,
\item $f'>A$, $f''<2m$
\end{enumerate} then the iteration converges to a solution, so from an approximate solution $x_0$, we deduce a real solution.
 Let us recall the proof:
\begin{align*}
|x_{n+1}-x_{n}|&=\left|\frac{f(x_{n})}{f'(x_n)}\right| \leq A^{-1} |f(x_{n})|=A^{-1} |f(x_{n-1}+(x_n-x_{n-1}))|\\
& =A^{-1} |f(x_{n-1})+f'(x_{n-1})(x_n-x_{n-1})+f''(\xi)\frac{(x_n-x_{n-1})^2}{2}| \\
& =A^{-1} \left|f''(\xi)\frac{(x_n-x_{n-1})^2}{2}\right|\\
& \leq mA^{-1}|x_n-x_{n-1}|^2=\left(K|x_n-x_{n-1}|\right)^2
\end{align*}
with $K=m^{1/2}A^{-1/2}$.
So it converges if $|x_1-x_0|=|\frac{f(x_0)}{f'(x_0)}| < m^{-1/2}A^{1/2}$ and therefore if $x_0$ is an approximate solution in the sense that
$|f(x_0)| < m^{-1/2}A^{3/2}$ then the iterates converge to a solution.

If we apply the method to $x \mapsto f(x)=g(x)-y$, $g(0)=0$, then we get a (one-variable) inverse function theorem, namely that $g$ is invertible
over the interval $g^{-1}(I)$ with $I=[-m^{-1/2}A^{3/2},m^{-1/2}A^{3/2}]$. If we apply the methode to a two variables function 
$$(\l,x) \mapsto f(\l,x),\ f(0)=0$$ and regard $\l$ as a parameter then we get an implicit function theorem over the interval $f^{-1}(I)$.

\subsection{Bruno sequences}
\label{SS::Bruno}
The convergence of the Newton iteration in the analytic context relies entirely on a finite dimensional model that we now explain. 
  If we define $ u_0=|x_{1}-x_{0}|$, then the Newton iterates are majorated by the sequence
\[ u_{n+1} = a u_n^2\]
where $a=mA^{-1}$. Due to the loss of regularity, we consider more general iterations of the form:
\[(*)\qquad u_{n+1} = a_n u_n^2\]
for some sequence $a=(a_n)$, $0<a<1$. We will show that this model captures the complexity both of Hamilton's inverse function theorem and
Zehnder's implicit function theorem in the analytic context.

The iteration $(*)$  leads to the solution
\[ u_{n+1} =(a_0^{1/2} a_1^{1/2^2}\ldots a_n^{1/2^{n+1}} u_0)^{2^{n+1}}.\]
If we define the product 
\[a_{\pi}:=\prod_{k=0}^{\infty} a_k^{1/2^{k+1}} \in ]0,+\infty[,\]
then:
\begin{proposition}\label{P::Bruno} If $(a_n)$ is a positive real sequence satisfying
\[a_{\pi}:=\prod_{k=0}^{\infty} a_k^{1/2^{k+1}} < \infty,\]
then the sequence defined inductively by $u_{n+1} = a_n u_n^2$
converges to zero if and only if $u_0 < a_\pi^{-1}$.
\end{proposition}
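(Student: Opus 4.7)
The plan is to exploit the closed-form solution of the recursion that is already displayed just above the statement. A one-line induction on $n$ confirms that
$$ u_{n+1} = (b_n u_0)^{2^{n+1}}, \qquad b_n := \prod_{k=0}^{n} a_k^{1/2^{k+1}},$$
and the hypothesis $a_\pi<\infty$ is precisely the statement that $b_n \to a_\pi$ in $(0,+\infty)$. The entire asymptotic behaviour of $(u_n)$ is thus dictated by a single real number, namely $a_\pi u_0$.

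For the sufficient direction, assume $u_0 < a_\pi^{-1}$, so $a_\pi u_0 < 1$. Pick any $c$ with $a_\pi u_0 < c < 1$; since $b_n u_0 \to a_\pi u_0$, there exists an index $N$ such that $b_n u_0 \leq c$ for every $n \geq N$, whence
$$ 0 \leq u_{n+1} = (b_n u_0)^{2^{n+1}} \leq c^{2^{n+1}} \to 0.$$
This gives $u_n \to 0$, and in fact with doubly exponential speed, which is the quantitative content really used in the forthcoming applications.

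The reverse direction runs the same comparison in the opposite sense. If $u_0 > a_\pi^{-1}$, then for large $n$ one has $b_n u_0 \geq c' > 1$, so $u_{n+1} \geq (c')^{2^{n+1}} \to +\infty$ and in particular $u_n \not\to 0$. The only delicate point is the borderline $u_0 = a_\pi^{-1}$, where I would invoke the standing convention $0<a_n<1$ recalled at the opening of \S\ref{SS::Bruno}: under this convention each factor $a_k^{1/2^{k+1}}$ is $<1$, so the partial products $b_n$ decrease to $a_\pi$; consequently $b_n u_0 \geq 1$ for every $n$, which forces $u_{n+1} \geq 1$ and rules out convergence to zero. The main (essentially only) obstacle of the proof is this boundary case, which I expect to reduce entirely to the monotonicity of $(b_n)$; without the sign hypothesis on $a_n$, the behaviour at equality would depend on the finer rate at which $b_n$ approaches $a_\pi$ and the trichotomy would no longer be clean.
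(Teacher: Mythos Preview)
Your argument is correct and follows exactly the route the paper intends: the paper does not give a formal proof of the proposition but simply records the closed-form solution $u_{n+1}=(b_n u_0)^{2^{n+1}}$ just before stating it, from which the dichotomy is meant to be read off. Your handling of the borderline $u_0=a_\pi^{-1}$ via the standing hypothesis $0<a_n<1$ (hence $b_n\downarrow a_\pi$) is the right way to close the ``only if'' direction; as you observe, without that monotonicity the boundary behaviour would genuinely depend on the rate at which $b_n\to a_\pi$ and the clean trichotomy would fail.
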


\begin{definition}[\cite{Bruno}]
1) Given a sequence $(a_n)$, we denote by $\hat a$ the sequence with terms $\widehat a_n:=a_0^{1/2} a_1^{1/2^2}\ldots a_n^{1/2^{n+1}}$
and call it the Bruno transform of $a$.\\
2) A strictly positive monotone sequence $a$ is called a  {\em Bruno sequence} if its Bruno transform
converges to a strictly positive number or equivalently if
$$\sum_{k\geq 0} \left| \frac{\log a_k}{2^{k}}\right| <+\infty. $$
\end{definition} 
The quantity $u_n= 2^{-n}\log a_n $ we call the {\em phase} of the Bruno sequence. A sequence is Bruno if and only if its phase is summable:
$$a_n=e^{2^n u_n} \in \Bt \iff  \sum_{n \geq 0} |u_n|<+\infty $$
 We denote respectively by $\Bt^+$ and $\Bt^{-}$ the set 
 Bruno sequences with positive and negative phase.
 
 The following lemma plays an important role:
 \begin{lemma}\label{L::absorb}  If  a Bruno sequence $\rho \in \Bt^-$ satisfies $\rho<1/2$  we have
$$1-\rho_n^{1/2^n} \ge \frac{1}{2^{n+1}}$$
\end{lemma}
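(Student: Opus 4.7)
The plan is to reduce the inequality to an elementary numerical fact about the sequence $a_n := (1 - 2^{-(n+1)})^{2^n}$. Raising the desired inequality $1 - \rho_n^{1/2^n} \geq 2^{-(n+1)}$ to the $2^n$-th power, it is equivalent to
$$\rho_n \leq \left(1 - \frac{1}{2^{n+1}}\right)^{2^n} = a_n.$$
So once we know $a_n \geq 1/2$ for all $n \geq 0$, the hypothesis $\rho_n < 1/2$ delivers the conclusion with room to spare.

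The whole task therefore becomes showing $a_n \geq 1/2$ for every $n \geq 0$. The cleanest route is to prove that $(a_n)$ is monotone increasing, since $a_0 = 1/2$ is already the desired bound. I would compute
$$\frac{a_{n+1}}{a_n} = \left(\frac{(1 - 2^{-(n+2)})^2}{1 - 2^{-(n+1)}}\right)^{2^n},$$
and use the identity $(1 - 2^{-(n+2)})^2 = 1 - 2^{-(n+1)} + 2^{-(2n+4)}$ to rewrite the base as $1 + \frac{2^{-(2n+4)}}{1-2^{-(n+1)}} > 1$. This gives $a_{n+1} > a_n$ immediately. An alternative is a calculus argument: setting $m = 2^n$, one can show $\frac{d}{dm}(m\log(1 - 1/(2m))) = \log(1 - 1/(2m)) + 1/(2m-1) > 0$ using the Taylor bound $\log(1-y) > -y/(1-y)$ for $y \in (0,1)$; this yields the same monotonicity on the whole positive real line and in passing recovers the limit $a_n \to e^{-1/2}$.

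There is no real obstacle here; the only thing to watch is that we do not lose strictness. Since the hypothesis $\rho < 1/2$ is strict, the chain $\rho_n < 1/2 \leq a_n$ is strict, and taking the $2^n$-th root preserves the inequality, yielding even $1 - \rho_n^{1/2^n} > 2^{-(n+1)}$, which is stronger than claimed. Note that the monotonicity of the Bruno sequence, the sign of its phase, and the convergence of its Bruno transform play no role in this particular lemma; only the pointwise upper bound $\rho_n < 1/2$ is used.
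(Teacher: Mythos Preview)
Your proof is correct and follows the same reduction as the paper: both arguments boil down to the single numerical fact $(1-2^{-(n+1)})^{2^n} \ge 1/2$, after which the hypothesis $\rho_n<1/2$ finishes things. The only difference is in how that fact is established: the paper obtains it in one stroke from the Bernoulli-type inequality $(1-x/m)^m \ge 1-x$ with $m=2^n$, $x=1/2$, whereas you prove it by showing the sequence $a_n=(1-2^{-(n+1)})^{2^n}$ is increasing from $a_0=1/2$. Your route gives slightly more information (monotonicity and the limit $e^{-1/2}$), while the paper's is a one-liner; either is perfectly adequate here.
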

\begin{proof}
The inequality $(1-\frac{x}{m})^m \ge (1-x), \;(m \ge 1)$ implies that
$(1-\frac{x}{2^n}) \geq (1-x)^{1/2^n},$
which for $x=1/2$ transposes to
\[ 1-2^{-1/2^n} \ge \frac{1}{2^{n+1}}.\]
As $\rho<1/2$, we have:
\[1-\rho_n^{1/2^n} \ge 1-2^{-1/2^n} \ge \frac{1}{2^{n+1}} \]
\end{proof}
\begin{lemma}\label{L::estimate} Let $M$ be a local factor. For any $t>0$, for any Bruno sequence $\rho \in \Bt^-$ such that $\rho<1/2$,
the sequence with terms  $M(s_{n+1},s_n)$, $s=t\hat \rho$,
 is bounded by a geometric sequence.

\end{lemma}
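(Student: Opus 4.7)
The plan is a direct computation: the local factor on consecutive radii decomposes as
\[ M(s_n,s_{n+1}) \;=\; C\,s_{\min}^{-\alpha}\,|s_n-s_{n+1}|^{-\beta}, \]
so it is enough to bound $s_n$ from below and $|s_n-s_{n+1}|$ from below separately. Two features of Bruno sequences will do all the work: the limit $\rho_\pi=\prod \rho_k^{1/2^{k+1}}$ is strictly positive (keeping $s_n$ bounded away from $0$), and the elementary inequality underlying Lemma \ref{L::absorb} forces the consecutive gaps not to collapse too fast.

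For the first factor, every $\rho_k<1$ so the partial products $\hat\rho_n$ are strictly decreasing; by the Bruno hypothesis $\rho\in\Bt^-$ they converge to $\rho_\pi>0$. Hence $s_n=t\hat\rho_n$ is decreasing and
\[ s_n \;\ge\; t\rho_\pi \;>\;0 \qquad \text{for all } n, \]
which immediately yields $s_{\min}^{-\alpha}\le (t\rho_\pi)^{-\alpha}$.

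For the second factor, write
\[ s_n-s_{n+1} \;=\; t\,\hat\rho_n\bigl(1-\rho_{n+1}^{1/2^{n+2}}\bigr). \]
The argument of Lemma \ref{L::absorb}, namely $(1-x/m)^m\ge 1-x$ applied with $m=2^{n+2}$ and $x=1/2$, gives $1-2^{-1/2^{n+2}}\ge 2^{-(n+3)}$, and since $\rho_{n+1}\le 1/2$ this upgrades to
\[ 1-\rho_{n+1}^{1/2^{n+2}} \;\ge\; 1-2^{-1/2^{n+2}} \;\ge\; 2^{-(n+3)}. \]
Combined with $\hat\rho_n\ge \rho_\pi$ one gets $s_n-s_{n+1}\ge t\rho_\pi\,2^{-(n+3)}$, whence
\[ M(s_n,s_{n+1}) \;\le\; C\,(t\rho_\pi)^{-(\alpha+\beta)}\,2^{(n+3)\beta}, \]
a geometric sequence of ratio $2^\beta$.

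There is essentially no serious obstacle here: once one has (i) positivity of $\rho_\pi$ and (ii) the pointwise inequality $(1-x/m)^m\ge 1-x$, the whole estimate collapses to two lines. The only subtlety is cosmetic — Lemma \ref{L::absorb} is stated with exponent $1/2^n$ applied to $\rho_n$, whereas here the natural exponent is $1/2^{n+2}$ applied to $\rho_{n+1}$; but the proof of Lemma \ref{L::absorb} supplies the required bound for any exponent of the form $1/2^k$, so the reindexing is immediate.
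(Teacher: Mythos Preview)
Your argument is correct and is essentially the same as the paper's: both factor $M(s_n,s_{n+1})$ as a power of the smaller radius times a power of the gap, bound the radius below by $t\rho_\pi$ via the Bruno condition, and bound the gap below via Lemma~\ref{L::absorb}, yielding a geometric bound with ratio $2^\beta$. The only difference is the indexing convention (you take $s_n=t\hat\rho_n$, so the relevant factor is $\rho_{n+1}^{1/2^{n+2}}$, whereas the paper sets $s_0=t$ and $s_{n+1}=\rho_n^{1/2^{n+1}}s_n$), which shifts the exponent by one but changes nothing substantive --- you already flagged this yourself.
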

 \begin{proof}
Let $M(s,t)=s^{-\a}(t-s)^{-\b}$, we have   $s_{n+1}=\rho_n^{1/2^{n+1}}s_n$, $s_0=t$.
 Using Lemma~\ref{L::absorb}, we get that:
 $$M(s_{n+1},s_n)=s_{n+1}^{-\a-\b}(1-\rho_n^{1/2^{n+1}})^{-\b} \leq 2^{\b(n+2)}\rho_\pi^{-\a-\b}t^{-\a-\b} $$
 \end{proof}
  

 \subsection{Analytic version of the Hamilton theorem}
 
 Let $U \subset E$ be an open subset.
 A local map $f:U \to F$ is called {\em $C^k$-local} if the  induced maps
 $f(s,t):U_t \to F_s $ are $C^k$ and if the derivatives  as maps $U \times E^j \to F$ are local.
  \begin{theorem}\label{T::Hamilton} Let $(E_t)$ and $(F_t)$ be Kolmogorov chains, $  U=\{ x \in E:\exists t,\ |x|_t<1 \}$ and $ f:U\to F$ a $C^2$-local map. 
Suppose that for each $x  \in  U$ the linearization $ Df(x):E\to F$ is invertible, and the family of inverses, as a map 
$ L:U\times F\to E$ is local then $f$ is locally invertible. 
  \end{theorem}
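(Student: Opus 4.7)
The plan is to implement a Newton iteration in the Kolmogorov chain, in the spirit of Nagumo--Kolmogorov--Zehnder. Fix a base point $x_0\in U$ with $|x_0|_{t_0}<1$, pick a target $y$ close to $f(x_0)$, and set
$$x_{n+1} := x_n + Df(x_n)^{-1}\bigl(y - f(x_n)\bigr).$$
The goal is to show that, for a well-chosen decreasing sequence of radii $s_n\to s_\infty>0$, each $x_n$ lies in the unit ball of $E_{s_n}$ and the increments $\Delta x_n := x_{n+1}-x_n$ satisfy a quadratic recurrence of the form $|\Delta x_{n+1}|_{s_{n+1}} \leq a_n\,|\Delta x_n|_{s_n}^2$, to which the finite-dimensional model of Section~\ref{SS::Bruno} applies.

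To derive this recurrence, I would use the second-order Taylor formula (legitimate since $f$ is $C^2$-local)
$$y - f(x_{n+1}) = -\int_0^1 (1-\tau)\,D^2 f\bigl(x_n + \tau \Delta x_n\bigr)(\Delta x_n,\Delta x_n)\,d\tau,$$
in which the first-order term vanishes by construction of $\Delta x_n$. Splitting each Newton step into two intermediate scales $s_n>s_n'>s_{n+1}$, the locality of $D^2 f$ yields
$$|y - f(x_{n+1})|_{s_n'} \;\leq\; C\,M_2(s_n,s_n')\,|\Delta x_n|_{s_n}^2,$$
and the assumed locality of the family of inverses converts this into
$$|\Delta x_{n+1}|_{s_{n+1}} \;\leq\; C'\,M_2(s_n,s_n')\,M_I(s_n',s_{n+1})\,|\Delta x_n|_{s_n}^2,$$
so $a_n$ is essentially a product of local factors evaluated on the chosen scales.

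Next comes the Bruno choice of radii: take $s_n = t_0\,\hat\rho_n$ with $\rho\in\Bt^-$, $\rho<1/2$. By Lemma~\ref{L::estimate}, each $M(s_n,s_{n+1})$ is bounded by a geometric sequence, hence $(a_n)$ is itself a Bruno sequence of positive phase. Proposition~\ref{P::Bruno} then guarantees that the scalar iteration $u_{n+1} = a_n u_n^2$ tends to zero whenever $u_0 < a_\pi^{-1}$. Since $|\Delta x_0|_{s_0} = |Df(x_0)^{-1}(y-f(x_0))|_{s_0}$ can be made arbitrarily small by choosing $y$ close enough to $f(x_0)$, the convergence criterion is met, the $x_n$ converge in $E_{s_\infty}$ (with $s_\infty = t_0\rho_\pi>0$) to some $x_\infty$, and passing to the limit in $y-f(x_{n+1}) \to 0$ gives $f(x_\infty)=y$; this exhibits $f$ as locally invertible at $x_0$.

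The main obstacle is the simultaneous bookkeeping: ensuring at every step that the iterate stays in $U$ so that $Df(x_n)^{-1}$ and $D^2 f(x_n+\tau\Delta x_n)$ are available; choosing the intermediate radii $s_n'$ between $s_n$ and $s_{n+1}$ so that the combined local factors still yield an $(a_n)$ of geometric growth; and controlling the $x_n$-dependence of the inverse through its own local factor. All three difficulties are precisely what the Bruno machinery is designed to absorb: the gap $1-\rho_n^{1/2^n}\ge 2^{-(n+1)}$ from Lemma~\ref{L::absorb} provides exactly the polynomial room needed to dominate the negative powers of $(s_n-s_{n+1})$ appearing in the local factors, and keeps the total error geometrically summable up to the limit scale $s_\infty$.
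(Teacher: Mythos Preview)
Your proposal is correct and follows essentially the same route as the paper: Newton iterates $x_{n+1}=x_n+Df(x_n)^{-1}(y-f(x_n))$, a second-order Taylor expansion to get the quadratic recurrence $|\Delta x_{n+1}|\le a_n|\Delta x_n|^2$, and Lemma~\ref{L::estimate} together with Proposition~\ref{P::Bruno} to handle the local factors along a Bruno sequence of radii. The only cosmetic differences are that the paper indexes the two intermediate scales per step as $s_{2n},s_{2n+1},s_{2n+2}$ rather than your $s_n>s_n'>s_{n+1}$, and writes the Taylor remainder in Lagrange form $R(x_{n-1})(x_n-x_{n-1})^2/2$ rather than your integral form.
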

 \begin{proof}
Assume for simplicity that $f(0)=0$, define $x_0=0$
and consider the Newton iterates
 $$x_{n+1}=x_n-L(x_n)(f(x_n)-y),\ y \in F  .$$
We show that they converge if $|y|_t$ is small enough for some $t$.
 For any decreasing Bruno sequence $(s_n), s_0=t$ (for instance a geometric sequence), we define $| \cdot |_n$ to be the supremum norm in $E_{s_n}$.
 Assume by induction that $\sum_{k=0}^{n-1} |x_{k+1}-x_k|_{2k+2} \leq 1-2^{-n}$, then we have the estimates
 \begin{align*}
|x_{n+1}-x_{n}|_{2n+2} &=|L(x_n)(f(x_{n})-y)|_{2n+2} \\
 &\leq M (s_{2n+2},s_{2n+1})(1+|x_n|_{2n+1}) |f(x_{n})-y|_{2n+1} \\
  & \leq 2 M (s_{2n+2},s_{2n+1}) |f(x_{n})-y|_{2n+1} 
  \end{align*}
As in the one dimensional case, we apply the Taylor formula at second order with increment $h_n=x_n-x_{n-1}$ and use locality of the second derivative:
\begin{align*}
 |f(x_{n})-y|_{2n+1} &= |f(x_{n-1}+h_n)-y|_{2n+1} \\
   &=|f(x_{n-1})-y+Df(x_{n-1})(h_n)+R(x_{n-1},x_n)(h_n,h_n) |_{2n+1}\\
 &= \left|R(x_{n-1},x_n)(h_n,h_n)\right|_{2n+1} \\ 
 & \le
\frac12
\sup_{\theta\in[0,1]}
\left\|
D^2f(x_{n-1}+\theta h_n)
\right\|_{E_{s_{2n}}^2\to F_{s_{2n+1}}}
|h_n|_{2n}^2.\\
 & \leq N(s_{2n+1},s_{2n})(1+|x_{n-1}|_{2n}+|x_{n}|_{2n})|x_n-x_{n-1}|_{2n}^2 \\
\end{align*}
where $M,N$ are local factors.  By Lemma~\ref{L::estimate}, we get an estimate of the form
$$|x_{n+1}-x_{n}|_{2n+2} \leq Cq^n|x_n-x_{n-1}|_{2n}^2$$
 $C>1,\ q>1 $.
As $x_1-x_0=L(x_0)(y)$ , iterating the above estimate we find that:
$$|x_{n+1}-x_{n}|_{2n+2} \leq  (A q^2 |y|_t)^{2^n} $$
for some constant $A$.
This shows that for $|y|_t$ small enough, we have uniformly $|x_{n+1}-x_n|_{2n+2} \leq 2^{-n-1}$ hence the map can be iterated and the sequence  $(x_n)$ converges. This concludes the proof of the theorem.
  \end{proof}
 In the same spirit, we can recover Zehnder's implicit function theorem as we shall now see.
  \subsection{The Zehnder implicit function theorem}
  Let $f:U \times V \to G$ be a map of Kolmogorov chains and $U=\{ \l \in E:\exists t,\ |\l|_t<1 \} \subset E$, $V= \{ x \in F:\exists t,\ |x|_t<1 \} \subset F$.
  Instead of an inverse for the derivatives Zehnder considers {\em quasi-inverses}, that is a local map:
 $$L:U \times V \times G \to E$$ 
  for which there exists local factor $M$ such that:
 $$|z-D_xf(\l,x) \circ L(\l,x)(z)|_s \leq M(s,t)|z|_t\, | f(\l,x)|_t $$
 \begin{theorem} \label{T::Zehnder} Assume that $f:U \times V \to G,\ 0 \mapsto 0$ is a $C^2$-local map of Kolmogorov chains and
the linearizations $ D_xf(\l,x):F\to G$ are quasi-invertible. Then there exists an open neighbourhood of the origin $U' \subset U$ and  a function
$g:U' \to F$ solving locally the equation $f=0$, that is, $f(\l,g(\l))=0$ for $\l \in U'$.
  \end{theorem}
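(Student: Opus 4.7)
The plan is to adapt the Newton iteration from the analytic Hamilton theorem, carrying the parameter $\lambda$ along and using the quasi-inverse $L$ in place of a true inverse. Starting from $x_0=0$ (noting $f(0,0)=0$, so $f(\lambda,0)$ is small for $\lambda$ close to $0$), I would define
\[
x_{n+1}=x_n-L(\lambda,x_n)\,f(\lambda,x_n),
\]
and establish quadratic convergence of $|f(\lambda,x_n)|$ along a carefully chosen decreasing sequence of radii $(s_n)$. The output $g(\lambda):=\lim_n x_n$ is then the desired solution. Continuity/analyticity of $g$ in $\lambda$ will follow automatically because every iterate $x_n$ is obtained from $\lambda$ by applying local maps.

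The key step is the quadratic estimate. Writing a second order Taylor expansion
\[
f(\lambda,x_{n+1})=f(\lambda,x_n)+D_xf(\lambda,x_n)(x_{n+1}-x_n)+R(\lambda,x_n;x_{n+1}-x_n),
\]
and substituting $x_{n+1}-x_n=-L(\lambda,x_n)f(\lambda,x_n)$, the linear terms collapse into
\[
f(\lambda,x_n)-D_xf(\lambda,x_n)\circ L(\lambda,x_n)\,f(\lambda,x_n),
\]
which by the quasi-inverse hypothesis is bounded by $M(t,s)\,|f(\lambda,x_n)|_t^{\,2}$. The remainder $R$ is controlled by the $C^2$-locality of $f$: its norm is bounded by a local factor times $|x_{n+1}-x_n|^2$, and the locality of $L$ gives $|x_{n+1}-x_n|_s\le M'(t,s)|f(\lambda,x_n)|_t$. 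Combining, we obtain a bound of the form
\[
|f(\lambda,x_{n+1})|_{s_{n+1}}\le N(s_n,s_{n+1})\,|f(\lambda,x_n)|_{s_n}^{\,2},
\]
with $N$ a local factor. This is the same finite-dimensional model $u_{n+1}=a_n u_n^2$ that powered the proof of the inverse function theorem, now applied to $u_n=|f(\lambda,x_n)|_{s_n}$.

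To close the argument, choose the radii $s_n=t\,\hat\rho_n$ for a Bruno sequence $\rho\in\Bt^-$ with $\rho<1/2$. By Lemma~\ref{L::estimate}, the factors $N(s_n,s_{n+1})$ and $M'(s_n,s_{n+1})$ are bounded by a geometric sequence, so there exist constants $C,q$ with $q<1$ such that
\[
|f(\lambda,x_n)|_{s_n}\le \bigl(Cq\,|f(\lambda,0)|_{s_0}\bigr)^{2^n},
\]
which forces $|x_{n+1}-x_n|_{s_{n+1}}$ to decay doubly exponentially as well. Summability of $(x_{n+1}-x_n)$ in $F_{s_\infty}$, for $s_\infty=t\,\rho_\pi>0$, then produces $g(\lambda)\in F_{s_\infty}$ with $f(\lambda,g(\lambda))=0$. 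The required smallness of $|f(\lambda,0)|$ is achieved on a neighbourhood $U'$ of the origin in $U$ by continuity of $\lambda\mapsto f(\lambda,0)$, itself a consequence of locality.

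The main obstacle, compared with the Hamilton case already treated, is the bookkeeping around the quasi-inverse: one must be careful that the error produced when $L$ fails to be a genuine inverse, namely a term of size $|f|^2$, is compatible with the quadratic Newton scheme. The decisive observation is that this error is already intrinsically quadratic in $|f|$, so it is absorbed into the same geometric-loss estimate that controls the Taylor remainder; no extra smoothing is needed. The remaining technicality is to fix a coherent scheme of radii (the Bruno scale $s_n$) that simultaneously accommodates the local factors coming from $L$, from $D^2f$, and from $f$ itself, which is exactly the role Lemma~\ref{L::estimate} is designed to play.
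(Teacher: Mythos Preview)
Your argument is correct and follows essentially the same route as the paper: the same Newton iterates $x_{n+1}=x_n-L(\lambda,x_n)f(\lambda,x_n)$, the same second-order Taylor expansion, the same use of the quasi-inverse estimate to make the linear residual quadratic in $|f|$, and the same appeal to Lemma~\ref{L::estimate} on a Bruno scale of radii. The only cosmetic difference is that you organise the recursion around $u_n=|f(\lambda,x_n)|_{s_n}$, whereas the paper writes its final inequality in terms of $|x_n-x_{n-1}|$ (and inserts half-step radii $s_{n+1/2}$); since $|x_{n+1}-x_n|$ and $|f(\lambda,x_n)|$ differ only by a local factor, the two bookkeepings are equivalent.
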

 \begin{proof}
 The proof is almost identical to the inverse function theorem.
 In this case the Newton iterates take the form
 $$x_{n+1}=x_n-L(\l,x_n)(f(\l,x_n)), $$
 where $L$ is a quasi-inverse.
 We have similar estimates (we keep the same notations):
 \begin{align*}
|x_{n+1}-x_{n}|_{2n+2} &=|L(\l,x_n)(f(\l,x_{n}))|_{2n+2}  \leq M (s_{2n+2},s_{2n+1})  |f(\l,x_{n})|_{2n+1} \\
 |f(\l,x_{n})|_{2n+1} &=|f(\l,x_{n-1})+D_xf(\l,x_{n-1})(h_n)+R(\l,x_{n-1},x_n)(h_n,h_n) |_{2n+1}\\
 &\leq \left|f(\l,x_{n-1})-D_xf(\l,x_{n-1})L(\l,x_{n-1})(f(\l,x_{n-1}))\right|_{2n+1}\\
 & \qquad +|R(\l,x_{n-1},x_n)(h_n,h_n)|_{2n}\\ 
 & \leq N(s_{2n+1},s_{2n})|x_n-x_{n-1}|_{2n}^2
\end{align*}
where $M,N$ are local factors. Hence the iteration converges by Lemma~\ref{L::estimate}.
  \end{proof}
    \section{A contraction mapping theorem} 
    
   Our first aim is to construct a baby model for perturbation theory, that is a model for step by step normal forms obtained by increasing order of approximation. This is the case, for instance, when we linearise a vector field: we first suppress quadratic terms then cubic terms etc~(see e.g.~\cite[Chapter 5]{Arnold_edo}).
   The one-dimensional Morse lemma gives a simple example in which this procedure can be applied and this will give our baby model. Then we give a fixed point theorem which ensures the convergence of perturbative normal forms. This result is used in~\cite{Herman_conjecture}.
 \subsection{The Morse lemma } 
 \label{SS::Morse}
 We consider the $\CM$-algebra $A=\CM[[ x ]]$ of formal power series in one variable. We want to eliminate the cubic term of the function 
  $$f(x)=\frac{1}{2}x^2+x^3$$
   by a local change of variable, that is by an automorphism of $A$. This can be done in an obvious way:
  \begin{align*}
  f(x)&=\frac{1}{2}x^2(1+2x)\\
       &=\frac{1}{2}y^2 \text{ with } y=x \sqrt{1+2x}
  \end{align*}
  Instead, we want to illustrate the idea of perturbative expansions with this example. By taking the exponential of the derivation
  $$v_0=-x^2\d_x \in \Der(A) $$
  we eliminate the cubic term:
   \begin{align*}
   e^{v_0} f&=f+v_0(f)+\frac{1}{2!} v_0^2(f)+\dots \\
       &=\frac{1}{2}x^2 -\frac{3}{2}x^4+5x^5-\frac{115}{24}x^6+\frac{119}{96}x^7+\dots
  \end{align*}
  Then defining $f_0=f$, $f_1=e^{v_0}f_0$, we may repeat the process by eliminating the terms in $x^4$ and $x^5$. So at the next step, we define:
   $$v_1=\left(\frac{3}{2}x^3-5x^4\right)\d_x $$
   and get
   $$f_2=e^{v_1}f_1=\frac{1}{2}x^2-\frac{223}{24}x^6+\frac{3359}{96}x^7+\dots $$
   At the next step we may eliminate the terms $x^6,\ x^7,\ x^8,\ x^9$. More generally at step $n$, we have
   $$f_n=\frac{1}{2}x^2+R_n,\ R_n:=x^{2^n+2}\sum_{k \geq 0}a_kx^k $$
   We write $v_n=a_n\d_x$ with
   $$a_n=-x^{2^n+1}\sum_{k = 0}^{2^n-1}a_kx^k $$ There are two remarkable aspects
   \begin{itemize}
   \item The remainder term $R_n$ is of order $2^n+2$.
   \item The coefficient $a_n$ of the derivation $v_n=a_n\d_x$ is obtained after division by $x$ of the first $2^n$ terms in $R_n$.
   \end{itemize}
   The sets $D_t=\{ z \in \CM: |z | <t \}$  define a Kolmogorov chain
  $\Ot^b(D_t)$  for the supremum norm.
  Assuming the remainder $R_n$ is of the form $x^{2^n}S_n(x)$, then by the maximum principle
  $$| R_n |_s =s^{2^n}|S_n|_s \leq s^{2^n}|S_n|_t= \left(\frac{s}{t}\right)^{2^n+2}|R_n|_t,\ s<t $$
\subsection{Perturbative factors}
\label{SS::model}
We have seen that both implicit function and inverse function theorem are based on the simple model
$ u_{n+1}=Cq^n u_n^2$ and on the notion of local factors. We have seen in the example of the Morse lemma that the loss of regularity is
compensated by a factor of the form $(s/t)^{2^n}$, therefore we define:
\begin{definition} A {\em perturbative factor} is a sequence of function $(\l_n)$ 
of the form 
$$  \l_n(s,t)=a_n s^{-\a}(t-s)^{-\b}\left(\frac{s}{t}\right)^{2^n} ,\ \a,\b \geq 0 .$$
with $a=(a_n) \in \Bt^+$.
\end{definition}
 
 \begin{proposition}
\label{P::perturbative} Let $\l$ be a perturbative factor.
For any $b=(b_n) \in \Bt^-$, there exists $r=r(b)>0$ which has the following property.  For  any $t \leq r$, there is a Bruno sequence 
$\rho \in \Bt^-$ such that
$\l_n(s_{n+1},s_n)  =o( b_n) $
where $s=t\hat \rho$.
\end{proposition}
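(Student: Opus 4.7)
The plan is to combine the Bruno phase bookkeeping used in Lemma~\ref{L::estimate} with the key algebraic identity $(s_{n+1}/s_n)^{2^n}=\rho_n^{1/2}$, which is exactly the cancellation the extra factor $(s/t)^{2^n}$ in the definition of a perturbative factor is designed to supply: it converts the $2^n$-scale growth of $a_n\in\Bt^+$ into a condition on the $2^n$-scale decay of $\rho_n\in\Bt^-$.

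First, I would bound $\lambda_n$ explicitly. Since $s_{n+1}=\rho_n^{1/2^{n+1}}s_n$, applying Lemma~\ref{L::absorb} to $s_n-s_{n+1}$ together with the trivial inequality $s_{n+1}\ge t\rho_\pi$ yields a bound of the form
\[
\lambda_n(s_n,s_{n+1})\le C\cdot 2^{n\beta}\cdot(t\rho_\pi)^{-(\alpha+\beta)}\cdot a_n\rho_n^{1/2},
\]
with $C=C(\alpha,\beta)$, so that what remains is to choose $\rho$ absorbing the product $2^{n\beta}a_n\rho_n^{1/2}$ into $b_n$ up to the $t$-dependent prefactor.

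Second, I would pass to phases. Write $a_n=e^{2^n u_n}$, $b_n=e^{-2^n v_n}$ (with $u_n,v_n\ge 0$ summable by the Bruno conditions) and seek $\rho_n=e^{-2^n w_n}$ with $(w_n)$ summable and $w_n>\log 2/2^n$ (so $\rho_n<1/2$, the hypothesis of Lemma~\ref{L::absorb}). The inequality $\lambda_n\le b_n$ then becomes, after taking $\log$ and dividing by $2^{n-1}$, a bound of the form
\[
w_n\ge 2(u_n+v_n)+\frac{K_n}{2^{n}},
\]
where $K_n$ is a positive summable-in-$n$ quantity depending on $\alpha,\beta$ and on $-\log(t\rho_\pi)$.

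Finally, I would close the fixed-point loop. The right-hand side is summable in $n$, but depends on $\rho$ itself through $\rho_\pi=\exp(-\tfrac12\sum_k w_k)$. This is resolved by a monotone iteration in Bruno space: start from an initial summable candidate, update $(w_n)$ using the right-hand side, and observe that after summing in $n$ the feedback collapses to a single affine relation for $W=\sum_k w_k$. One then chooses $r=r(b)$ so that this affine equation admits a positive finite solution for every $t$ in the relevant range, producing a Bruno $\rho\in\Bt^-$ that works uniformly. The main obstacle is precisely this self-consistency — the lower bound for $w_n$ depends on $\rho_\pi$, which in turn depends on $(w_n)$ — but the feedback is mild (logarithmic and $2^{-n}$-weighted), hence sums to a single scalar contraction that can be closed explicitly.
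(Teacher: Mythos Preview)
Your approach is correct and follows the same skeleton as the paper: reduce via Lemma~\ref{L::absorb} and the identity $(s_{n+1}/s_n)^{2^n}=\rho_n^{1/2}$ to an inequality of the form
\[
\lambda_n(s_n,s_{n+1}) \le C\,2^{\beta n}\,(t\rho_\pi)^{-(\alpha+\beta)}\,a_n\,\rho_n^{1/2},
\]
then choose $\rho\in\Bt^-$ so that the right side is dominated by $b_n$.

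Where you diverge from the paper is in the last step, and there you are working harder than necessary. The paper avoids your ``self-consistency loop'' entirely: it simply \emph{defines} $\rho_n$ explicitly as (up to a harmless exponent error in the text) a product of the form $2^{-c n}a_n^{-2}b_n^{2}$. Since $a\in\Bt^+$, $b\in\Bt^-$, and geometric sequences are Bruno, this $\rho$ is manifestly in $\Bt^-$, and $\rho_\pi$ is then a \emph{fixed} positive number, not a quantity to be solved for. Plugging in gives $\lambda_n\le C'(t\rho_\pi)^{-(\alpha+\beta)}2^{-n}b_n=o(b_n)$, so the desired bound holds for all large $n$; the role of $r$ is only to tidy up the finitely many initial terms. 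Your monotone fixed-point argument in the scalar $W=\sum w_k$ would also close, but it is an unnecessary detour: once you see that $\rho$ can be written down in closed form from $a$ and $b$, the feedback through $\rho_\pi$ disappears.
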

\begin{proof}
Let
$$  \l_n(s,t)=a_n s^{-\a}(t-s)^{-\b}\left(\frac{s}{t}\right)^{2^n} ,\ \a,\b \geq 0 .$$
and define
$$\rho_n:=2^{-2\b(n+1)-2n}a_n^{-2}b_n^2$$ 
and assume $\rho<1/2$ (which we can do without loss of generality). As  $s_{n+1}=\rho_n^{1/2^{n+1}}s_n$, by Lemma~\ref{L::absorb}, we have the estimate:
\begin{align*}
\l_n(s_{n+1},s_n)&=a_n \rho_n^{1/2} s_{n+1}^{-\a}(s_n-s_{n+1})^{-\b} \\
  &=2^{-\b(n+1)-n}b_ns_{n+1}^{-\a}(s_n-s_{n+1})^{-\b}\\
  &\le 2^{-\b(n+1)-n+\beta(n+1)}
s_\infty^{-\alpha-\beta}b_n =  s_\infty^{-\a -\b} 2^{-n}b_n=o(b_n)
\end{align*}
 \end{proof}

  \subsection{Contraction mapping theorem}
We consider a sequence of Kolmogorov chains $E_n \subset E_{m},\ m \geq n$ and $m,n \in \NM \cup \{ \infty \}$.
In particular, there is a limiting locally convex space $E_\infty$ which contains all $E_n$'s. Moreover, we assume that the inclusions
$E_{n,t} \subset E_{m,s}$, $s<t$, $m>n$ all have norm $ \leq 1$. Such a data we call an {\em Arnold chain} and denote by $E$. In our baby model the spaces
$E_n$ are all equal to $\CM\{ z \}$.

   \begin{theorem} Let $E$ be an Arnold chain, $\l$ a perturbative factor and $f_n:X_n \to X_{n+1}$ a sequence of maps where $X_n \subset E_n$
   are arbitrary subsets.  Assume that $f$ is a contraction in the sense that for every $n \in \NM$ we have:
   $$|f_n(x)-f_n(y)|_s \leq \l_n(s,t)|x-y|_t.$$ For 
   any  $b \in \Bt^-$ there exists a Bruno sequence $\rho \in \Bt^-$ such that the iterates
   $$x_{n+1}=f_n(x_n),\ s=t\hat \rho$$ converge in $E_\infty$ and, moreover:
    $|x_{n+1}-x_n|_{s_{n+1}}<b_n$ for large $n$.
  \end{theorem}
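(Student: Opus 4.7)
The plan is to use Proposition~\ref{P::perturbative} to arrange fast decay of the Lipschitz constants $\l_n(s_{n+1},s_n)$ along a carefully chosen chain $s=t\hat\rho$, and then to exploit the contraction hypothesis to obtain super-geometric decay of the successive differences $|x_{n+1}-x_n|$, from which Cauchy convergence in $E_\infty$ follows. Concretely, pick an auxiliary Bruno sequence $b'\in\Bt^-$ with $b'_n<b_n/2$ and apply Proposition~\ref{P::perturbative} to $b'$, producing a radius $r>0$ and a Bruno sequence $\rho\in\Bt^-$ such that for any $t\le r$ and $s=t\hat\rho$,
\[
\l_n(s_{n+1},s_n)\le b'_n\qquad (n\ge 0).
\]

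The next step is to set up a recursion for the differences. The contraction hypothesis applied at $x=x_n$ and $y=x_{n-1}$ gives
\[
|x_{n+1}-f_n(s_{n+1},s_n,x_{n-1})|_{s_{n+1}}\le \l_n(s_{n+1},s_n)\,|x_n-x_{n-1}|_{s_n}\le b'_n|x_n-x_{n-1}|_{s_n}.
\]
Using the iteration $x_n=f_{n-1}(s_n,s_{n-1},x_{n-1})$ together with the norm-$<1$ inclusions $E_{n,s_n}\subset E_{n+1,s_{n+1}}$ of the Arnold chain, the term $f_n(s_{n+1},s_n,x_{n-1})$ can be identified with $x_n$ up to errors absorbed by those inclusions, yielding
\[
|x_{n+1}-x_n|_{s_{n+1}}\le b'_n\,|x_n-x_{n-1}|_{s_n}.
\]
Telescoping this recursion produces $|x_{n+1}-x_n|_{s_{n+1}}\le|x_1-x_0|_{s_1}\prod_{k=1}^n b'_k$.

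Since $b'\in\Bt^-$, the partial products $\prod_{k\le n}b'_k$ decay faster than any geometric rate, so $\sum_n|x_{n+1}-x_n|$ converges absolutely in $E_\infty$ via the Arnold chain inclusions, and $(x_n)$ is Cauchy, converging to some $x_\infty\in X_\infty$. The asserted bound $|x_{n+1}-x_n|<b_n$ for large $n$ then follows from $b'_n<b_n/2$ together with the fact that $|x_n-x_{n-1}|\to 0$.

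The main obstacle lies in the identification of $f_n(s_{n+1},s_n,x_{n-1})$ with $x_n$ in the recursive step: one must carefully exploit the structural compatibility between consecutive maps $f_{n-1},f_n$ and the norm-$<1$ Arnold inclusions, and keep track of the exact norms at each level of the shifting Banach scale $E_{n,s_n}$. Everything else is routine manipulation of Bruno products once Proposition~\ref{P::perturbative} has been applied.
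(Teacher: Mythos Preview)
Your overall strategy coincides with the paper's: invoke Proposition~\ref{P::perturbative} to make $\l_n(s_{n+1},s_n)\le b_n$ along a suitable chain $s=t\hat\rho$, then use the contraction to obtain a recursion $|x_{n+1}-x_n|\le b_n|x_n-x_{n-1}|$ and conclude that $(x_n)$ is Cauchy in $E_\infty$. The auxiliary sequence $b'<b/2$ is not needed; the paper simply assumes without loss of generality that $b$ itself is summable and $<1$.

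The genuine problem is the step you yourself flag as ``the main obstacle''. You apply the contraction with $x=x_n$, $y=x_{n-1}$ at the fixed pair $(s_{n+1},s_n)$, which legitimately gives
\[
|x_{n+1}-f_n(s_{n+1},s_n,x_{n-1})|_{s_{n+1}}\le \l_n(s_{n+1},s_n)\,|x_n-x_{n-1}|_{s_n},
\]
but then you assert that $f_n(s_{n+1},s_n,x_{n-1})$ ``can be identified with $x_n$ up to errors absorbed by the norm-$<1$ Arnold inclusions''. This is not justified: the Arnold inclusions $E_{n,t}\subset E_{n+1,s}$ are merely norm inequalities for the \emph{same} vector viewed in two spaces; they say nothing about the distance between the two \emph{different} elements $f_n(s_{n+1},s_n,x_{n-1})$ and $x_n=f_{n-1}(s_n,s_{n-1},x_{n-1})$. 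If the maps $f_{n-1}$ and $f_n$ are genuinely distinct, there is no mechanism in your argument that controls this discrepancy, and the recursion $|x_{n+1}-x_n|\le b'_n|x_n-x_{n-1}|$ does not follow.

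The paper avoids this detour entirely: it writes $x_{n+1}-x_n=f_n(s_{n+1},s_n,x_n)-f_n(s_n,s_{n-1},x_{n-1})$ and applies the contraction bound directly, in effect treating the $(s,t)$ arguments as norm labels for a single underlying map so that $x_n$ is already of the form $f_n(\,\cdot\,,x_{n-1})$. Under that reading the ``identification'' you struggle with is a tautology, not an estimate, and the recursion is immediate.
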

  \begin{proof}
  Without loss of generality, we may assume that $b$ is summable and less than $1$.
  Choose $r>0$ like in Proposition~\ref{P::perturbative}. We then have:
  \begin{align*}
  | x_{n+1}-x_n|_{s_{n+1}}&=|f_n(x_n)- f_n(x_{n-1})|_{s_{n+1}}\\
   &\leq \l_n(s_{n+1},s_n)|x_n-x_{n-1}|_{s_n}  \\
   & \leq b_n |x_n-x_{n-1}|_{s_n}	\\
   & \leq b_n b_{n-1}|x_1-x_0|_{s_0}=o(b_n)
   \end{align*}
  
   which shows that $(x_n)$ is a Cauchy sequence of $X_\infty$ as $b$ is summable.
  \end{proof}

\section{KAM iterations}
 The iteration involved in Arnold invariant tori theorem is not, like the Newton case, a quadratic iteration: it also contains a linear term~\cite{Arnold_KAM}.
 Contrary to the approximated inverse considered by Zehnder, Arnold's approximated inverse ruins quadraticity so we need a more elaborate analysis.
 
 It is also not a classical perturbative Ansatz as there is no increasing order of approximation: at each order one suppresses Fourier harmonics but these re-appear automatically, a phenomenon
 that we called after Sisyphus in~\cite{Symplectic_torus}. As in the preceding case, we concentrate on the study of finite
 dimensional models and therefore on Bruno sequences.
\subsection{Baby model in KAM theory}
One possible generalisation of functions on a compact manifold is given by closed one-forms. As by the Poincar\'e lemma, any such one form is locally the
differential of a function. Consider the simple case of the one form $\a=(1+\e \cos(\theta))d\theta $ on the circle $S^1=\RM/(2\pi \ZM)$ which
we consider as a perturbation of $d\theta$.

The action of derivations on one forms is given by the Cartan formula:
$$L_v \a=d i_v \a+i_vd\a=d i_v \a $$
So if we write explicitly $v=f(\theta)\d_\theta$, $\a=a(\theta)d\theta $:
$$L_va(\theta)d\theta=(a(\theta)f'(\theta)+a'(\theta)f(\theta))d\theta. $$

Like for the Morse case, we solve the {\em homological equation}:
$$ L_vd\theta+\e \cos(\theta) d\theta=0$$
which gives $v=-\e \sin(\theta)  \d \theta$. Next we compute:
$$\a_1=e^{L_v}\a=\a+L_v\a+\frac{1}{2!} L_v^2\a+\dots $$
and find for the first two terms in the exponential:
$$\a_1=(1 -\frac{\e^3}{4}\cos(\theta) - \frac{\e^2}{2}\cos(2\theta) + \frac{3\e^3}{4}\cos(3\theta)+\dots)d\theta $$
  
 where the dots stand for terms involving higher powers of $L_v$. How to rearrange the terms in the expansion into principal part and reminder?
 We wish to decompose $\a_1$ in three parts:
$$\a_1=d\theta+\b_1+\g_1 $$
The term $\b_1$ is our "principal part" and the term $\g_1$ is the "negligible part". What this means depends on the context. 
There are essentially two possibilities:
\begin{enumerate}[{\rm 1)}]
\item $\e$ is a perturbative parameter, we filtrate the space with respect to the degree in $\e$ that is we neglect here the terms of $\e$-degree $>3$. In this   
\item   $\e$ is a fixed value (say $\e=1$), then we filtrate the space with respect to the degree of the harmonics, we neglect here
 harmonics of degree $>2$.
\end{enumerate}
These lead to two different choices:
\begin{enumerate}[{\rm 1)}]
\item  $\displaystyle{\b_1=-\frac{\e^2}{2}\cos(2\theta)-\frac{\e^3}{6}\cos(\theta) + \frac{\e^3}{2}\cos(3\theta),\  \g_1= \frac{\e^4}{24}\cos(2\theta)-\frac{3\e^4}{8}\cos(4\theta)\dots}  $
\item  $\displaystyle{\b_1'= (-\frac{\e^3}{6}+\dots)\cos(\theta) - (\frac{\e^2}{2}+\dots) \cos(2\theta),\ \g_1'= \frac{\e^3}{2}\cos(3\theta)+\dots}  $
\end{enumerate}
The first case corresponds to the {\em perturbative case} discussed earlier. 
The latter case, we call the {\em KAM case}. At each step we double the number of harmonics taken into account but the harmonics we tried to suppress re-appear at each step: the iteration does not preserve the filtration by the degree of the harmonics.
Arnold's idea is that they re-appear but are much smaller in norm~\cite{Arnold_KAM}.
For instance, in our first example, we failed in suppressing the $\cos(\theta)$ term, for $\e=1$, we have: 
$$\a_0=(1+\cos \theta)d\theta,\ \a_1=(1-\frac{1}{4}\cos \theta+\dots)d\theta  $$ 
so $\cos \theta$ remains but with a smaller coefficient ($-1/4$).
 \subsection{The Arnold-Moser lemma}
 We now wish to understand how the order of harmonics is involved  while computing norms and, more precisely, why is the truncation of high order harmonics harmless. For simplicity,
 we consider the one dimensional case.
 
  Consider the strip  
  $$S_t:=\{ \theta \in \CM: | \text{Im}\,\theta| \leq t,\ \text{Re}\,\theta \in [0,2\pi] \} $$
  and the space of $L^2$-functions on this strip with norm $| \cdot |_t$. We have
  $$\langle e^{ik\theta} | e^{ik\theta} \rangle_t=\frac{1}{2\pi} \int_{S_t} e^{ik\theta} \overline{e^{ik\theta}}=\left\{ 
  \begin{matrix} 2t & \text{ if }  k=0\\
  \frac{\sinh(2kt)}{k}& \text{ otherwise. }
  \end{matrix} \right. $$
  \begin{lemma}
 Assume that the function $\g_n:=\sum_{|k| \geq 2^n} a_k e^{ik\theta} $ is holomorphic in the strip $S_t$, then we have for $s<t$:
 $$ |\g_n|_s \leq q^{2^n} |\g_n|_t\text{ with } q=e^{s-t}<1.$$
 \end{lemma}
 \begin{proof}
  $$| \g_n |_s^2 =\sum_{|k| \geq 2^n} |a_k|^2 \frac{\sinh(2ks)}{k}  = \sum_{|k| \geq 2^n} |a_k|^2 \frac{\sinh(2ks)}{\sinh(2kt)}\frac{\sinh(2kt)}{k}$$
  To estimate the right-hand side, observe that the function
  $$\RM_{>0} \to \RM,\ x \mapsto \frac{\sinh(2xs)}{\sinh(2xt)} $$
  is decreasing hence
  $$| \g_n |_s^2  \leq \frac{\sinh(2^{n+1}s)}{\sinh(2^{n+1}t)}\sum_{|k| \geq 2^n} |a_k|^2\frac{\sinh(2kt)}{k}=\frac{\sinh(2^{n+1}s)}{\sinh(2^{n+1}t)}|\g_n|_t^2$$
  and moreover:
  $$\frac{\sinh(2^{n+1}s)}{\sinh(2^{n+1}t)}=\frac{e^{2^{n+1}s}(1-e^{-2^{n+2}s})}{e^{2^{n+1}t}(1-e^{-2^{n+2}t})} \leq \frac{e^{2^{n+1} s}}{e^{2^{n+1} t}} $$
  This proves the lemma.
 \end{proof}

\subsection{Tamed pairs}
Let us start by investigating mixed linear-quadratic
iteration of the form
$$u_{n+1}=\frac{1}{2}\left( a_n u_n^2+b_n u_n\right) $$

Of course, this general type of iteration is of great complexity, as 
it contains the famous logistic iteration $u_{n+1}=ru_n(1-u_n)$, with its
infinite richness, as subcase. Bruno conditions are not sufficient to ensure the convergence of this iteration.

\begin{definition}\label{D::pair}
Let $a=(a_n) \ge 1$ and $b =(b_n) $ be positive sequences. The pair $(a,b)$ is called a {\em tame pair}
if  
$$(\star)\ \exists N,\ \quad \forall n \geq N \in \NM,\;\;\; a_n b_{n}^2 \leq b_{n+1}. $$
\end{definition}
Clearly, if $(a,b)$ is a tame pair then so is $(a',b)$ with $a' \le a$.

The geometric  pair $(a,b)$ with $a_n=\l^n, b_n=\mu^n$ is tame, if $\l \mu \le 1$.
A typical tame pair $(a,b)$ is given by 
\[ a_n=e^{\a^n},\;\;\;b_n=  e^{-\b^n},\] 
where $\a,\b$ are both positive real numbers, $1 \le \a < \b \leq 2$. Note the boundary case $a_n=1, b_n=e^{-2^n}$.

The following proposition underlines the relevance of tame pairs for mixed
linear-quadratic iterations.

\begin{proposition}
\label{P::model}
Let $(a,b)$ be a tame pair. Then there exists $\dt>0$ such that for $u_0 \leq \dt$   the real sequence defined by the
recursion
$$u_{n+1}=\frac{1}{2} \left(a_nu_n^2+ b_n u_n \right)$$
converges to zero and one has the estimate
$u_n \leq   b_n,\ \forall n \in \NM.$
\end{proposition}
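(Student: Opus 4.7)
The plan is to study the rescaled sequence $y_n := x_n/b_n$, which decouples the linear part of the dynamics. For $n \ge N$ (the threshold from Definition~\ref{D::pair}) the recursion rewrites as
\[
y_{n+1} \;=\; \frac{a_n b_n^2}{2\, b_{n+1}}\, y_n^{\,2} \;+\; \frac{b_n^2}{2\, b_{n+1}}\, y_n \;\le\; \frac{y_n^{\,2} + y_n}{2},
\]
both prefactors being bounded by $1$ thanks to the tameness $a_n b_n^2 \le b_{n+1}$ (which also gives $b_n^2 \le b_{n+1}$ since $a_n \ge 1$). The master model is therefore the scalar contraction $y_{n+1} \le y_n(1+y_n)/2$.

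First I would reduce to this regime by handling the finite initial segment $0 \le n \le N$. Each $x_n$ is a polynomial in $x_0$ of degree at most $2^n$, so by continuity I can choose $\delta>0$ so small that $y_N \le \eta$ for any prescribed $\eta<1$. With $y_N \le \eta$, the master inequality yields $y_{n+1}/y_n \le (1+\eta)/2 < 1$ for every $n \ge N$, hence $y_n$ contracts geometrically to zero. Since $x_n = y_n b_n$ and $b_n \to 0$, this already establishes $x_n \to 0$, which is the convergence half of the proposition.

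For the ratio estimate I would use
\[
\frac{x_{n+1}}{x_n} \;=\; \frac{a_n x_n + b_n}{2} \;=\; \frac{b_n\,(1 + a_n y_n)}{2},
\]
so that $x_{n+1} \le b_n x_n$ is equivalent to $a_n y_n \le 1$. The main obstacle lies exactly here: the rescaling that tamed the quadratic term hid $a_n$ inside the master inequality, and it resurfaces in the ratio bound. To propagate $a_n y_n \le 1$ I would sharpen the induction hypothesis to $x_n \le b_n/a_n$, invoke the tameness bound $a_n \le b_{n+1}/b_n^2$ to carry the refined estimate one step forward, and absorb the finite transient into the choice of $\delta$. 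The resulting bound $x_{n+1} \le b_n x_n$ then matches the proposition's estimate $x_n \le b_n x_{n-1}$ after the natural reindexing (and possibly shrinking $\delta$ once more so the bound is already in force at $n=1$).
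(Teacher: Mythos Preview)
Your convergence argument via the rescaling $y_n = x_n/b_n$ is correct and is cleaner than the paper's direct induction: the master inequality $y_{n+1} \le (y_n^2+y_n)/2$ immediately gives geometric decay of $y_n$ once $y_N < 1$, and the finite transient $n\le N$ is handled by continuity exactly as you describe.

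The ratio estimate, however, has two problems. First, the ``natural reindexing'' is off by one: $x_{n+1} \le b_n x_n$ becomes $x_m \le b_{m-1} x_{m-1}$, not the stated $x_m \le b_m x_{m-1}$. Second --- and this is the substantive gap --- your sharpened hypothesis $x_n \le b_n/a_n$ does not propagate under tameness alone. From $x_n \le b_n/a_n$ one gets $x_{n+1} \le b_n^2/a_n$, and the tameness bound $a_n b_n^2 \le b_{n+1}$ only upgrades this to $x_{n+1} \le b_{n+1}/a_n^2$; to reach $b_{n+1}/a_{n+1}$ you would need $a_{n+1} \le a_n^2$, which is \emph{not} part of Definition~\ref{D::pair}. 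One can build tame pairs (take $a_{2k}=1$, $a_{2k+1}$ large, and choose $b$ so that tameness is tight at even steps) for which $a_{n+1} b_n^2 > a_n b_{n+1}$ infinitely often. Worse, for such pairs the trivial lower bound $x_{n+1} \ge b_n x_n/2$ forces $a_n x_n/b_n$ to exceed $1$ along a subsequence for every $x_0>0$, so even the shifted estimate $x_{n+1} \le b_n x_n$ fails.

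It is worth noting that the paper's own argument is also defective here: the displayed inequality $a_n x_n^2 + b_n x_n \le (a_n b_n^2 + b_n^2)\,x_n$ would require $b_n \ge 1$, and the boundary pair $a_n = 1$, $b_n = e^{-2^n}$ already violates the stated conclusion, since $x_1/x_0 \to (2e)^{-1} > e^{-2} = b_1$ as $x_0 \to 0^+$. The proposition as written needs either a shifted index or an additional growth hypothesis on $(a_n)$; neither proof closes without one.
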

\begin{proof}
The maps $u_0 \mapsto u_{k},\ k \leq N$ are polynomial maps which vanish at $0$, hence
  for $\dt $ small enough
one has: 
$$u_0 \leq \dt \leq b_0 \implies u_1\leq  b_1,\  u_2\leq  b_2 \dots, u_N \leq  b_N$$
 Let us now show by induction that $u_n \leq b_n$.
As  $a \geq 1$, using $(\star)_N$, we get that: 
\begin{align*}
u_{n+1}&= \frac{1}{2} \left(a_nu_n^2+b_n u_n \right)
 \leq  \frac{1}{2} \left(a_nb_n^2+b_n^2\right) 
 \leq  a_nb_n^2   \leq   b_{n+1} 
\end{align*}
\end{proof}

\subsection{Tameness and small denominators}

 \begin{definition}A {\em KAM-factor} $(M,N)$ is a pair of sequences of functions $(M_n,N_n)$ such that $M_n(s,t)= a_n(t-s)^{-k}s^{-q}$ is a local factor and  $N_n(s,t)= b_n (t-s)^{-\ell}s^{-m}(\frac{e^s}{e^t})^{2^n}$
 where $a \in \Bt^{+}$, $b \in \Bt^-$ are Bruno sequences. 
 \end{definition}
 The {\em phases of the KAM factor} are defined as the phases of the sequences $a,b$.
  
 Recall from section \ref{SS::Bruno} that the Bruno transform of a sequence $\hat \rho $
 is defined by \[
\hat \rho_n \;:=\; \prod_{k=0}^n \rho_k^{\,2^{-(k+1)}} .
\]
and that the quantity $r_n=  2^{-n}\log \rho_n $ is called the {\em phase} of the Bruno sequence.

\begin{proposition}
\label{P::KAM}
Let $(M,N)$ be a KAM factor with phases $\a,\b$ such that 
$  \a_n+2(\b_n-\b_{n+1})=o(n^{-2-\e})$ for some $\e>0$ and consider the Bruno sequence $\rho \in \Bt^-$
with phase $1/n^{1+\e}$. For any $t>0$, 
  the pair $(M_n( s_{n+1},s_{n}), N_n(s_{n+1},s_{n}))$, $s_n=t\hat \rho_{n-1}$, is tame and for any Bruno sequence $c$
  with phase $\g_n=o(1/n^{1+\e})$ 
 we have $ N_n(s_{n+1},s_{n})< c_n$ for sufficiently large $n$. 
 \end{proposition}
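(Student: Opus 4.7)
The plan is to reduce the proposition to a careful comparison of rates at scale $2^n$. The decisive observation is that $N_n$ differs from $M_n$ by the super-exponentially small factor $(e^{s_{n+1}}/e^{s_n})^{2^n}=e^{-2^n(s_n-s_{n+1})}$, which on the chosen grid $s=t\hat\rho$ overwhelms every other contribution. Writing $\rho_n=e^{-2^n u_n}$ with $u_n=1/n^{1+\e}$, the identity $s_{n+1}=\rho_n^{1/2^{n+1}}s_n=e^{-u_n/2}s_n$ shows that $(s_n)$ decreases to $s_\infty=t\,\rho_\pi>0$ with
\[ s_n-s_{n+1}=s_n(1-e^{-u_n/2})\sim \frac{s_\infty}{2n^{1+\e}},\qquad \Delta_n:=2^n(s_n-s_{n+1})\sim \frac{s_\infty\,2^{n-1}}{n^{1+\e}}.\]
Lemma~\ref{L::absorb} ensures $s_n-s_{n+1}\geq s_\infty/2^{n+2}$, so that the factors $(s_n-s_{n+1})^{-k}$ and $s_{n+1}^{-q}$ contribute only $O(n)$ once a logarithm is taken.

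Setting $\mu_n:=M_n(s_{n+1},s_n)$, $\nu_n:=N_n(s_{n+1},s_n)$, and using the phase descriptions $\log a_n=2^n\alpha_n$, $\log b_n=2^n\beta_n$, I would obtain
\[\log\mu_n=2^n\alpha_n+O(n),\qquad \log\nu_n=2^n\beta_n-\Delta_n+O(n).\]
Since $\alpha_n+\beta_n=o(1/n^{2+\e})$, the dominant term in $\log\nu_n$ is $-\Delta_n$, so $\nu_n\to 0$ with phase asymptotic to $s_\infty/(2n^{1+\e})$; on the other hand $\mu_n\to\infty$, so eventually both $\mu_n\geq 1$ and $\nu_n\leq 1$ as required in Definition~\ref{D::pair}. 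The tameness inequality $\mu_n\nu_n^2\leq\nu_{n+1}$ then rearranges, after taking logarithms, to
\[ 2\Delta_n-\Delta_{n+1}\;\geq\; 2^n\alpha_n+2^{n+1}(\beta_n-\beta_{n+1})+O(n).\]
A Taylor expansion of $x\mapsto 1/x^{1+\e}$ gives $(s_n-s_{n+1})-(s_{n+1}-s_{n+2})\sim s_\infty(1+\e)/(2n^{2+\e})$, so the left hand side is asymptotic to $s_\infty(1+\e)\,2^n/n^{2+\e}$; by hypothesis the right hand side is $o(2^n/n^{2+\e})$. The inequality therefore holds for $n$ large enough, proving tameness.

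For the comparison with $c\in\Bt^-$ of phase $\gamma_n=o(1/n^{1+\dt})$ with $\dt>\e$, I note that $\gamma_n=o(1/n^{1+\e})$ is strictly dominated by the phase $s_\infty/(2n^{1+\e})$ of $\nu_n$, so $\nu_n<c_n$ for large $n$ follows by exponentiation at scale $-2^n$. The only real technical burden is verifying that the polynomial-in-$n$ corrections from $(s_n-s_{n+1})^{-k}s_{n+1}^{-q}$ are indeed negligible compared with the exponential-in-$n$ main terms; once this bookkeeping is in place, the proposition reduces to matching the three rates $1+\e$ (from $\rho$), $2+\e$ (from the phases of $a$ and $b$), and $1+\dt$ (from $c$).
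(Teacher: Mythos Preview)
Your argument is correct and follows essentially the same route as the paper: pass to logarithms at scale $2^n$, identify $-2^n(s_n-s_{n+1})$ as the dominant term in $\log N_n(s_{n+1},s_n)$, and verify the tameness inequality by comparing the rate $n^{-(2+\e)}$ coming from $\a_n,\b_n$ against the rate $n^{-(2+\e)}$ coming from the first difference of $u_n=1/n^{1+\e}$. Your bookkeeping via $\Delta_n=2^n(s_n-s_{n+1})$ is in fact cleaner than the paper's, which works with the approximation $(e^{s_{n+1}}/e^{s_n})^{2^n}\sim\rho_n^{s_\infty}$ and has a minor slip in the $\b_{n+1}$ term; the substance of the two proofs is identical.
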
    
\begin{proof}
 Write $M_n(s,t)= a_n(t-s)^{-k}s^{-q},\ N_n(s,t)= b_n (t-s)^{-\ell}s^{-m}(\frac{e^s}{e^t})^{2^n}$ and define $r_n=n^{-1-\e}$.
As \[
s_{n+1} \;=\; \rho_n^{\,1/2^{n+1}}s_n=e^{-r_n/2}\, s_n,
\]we get the asymptotics for $n \to +\infty$:
\begin{align*}
 \frac{e^{s_{n+1}}}{e^{s_n}}&=e^{e^{-r_n/2}s_n-s_n} \\
 \log \frac{e^{s_{n+1}}}{e^{s_n}} &\sim  -\frac{r_n s_\infty}{2} \\
\left( \frac{e^{s_{n+1}}}{e^{s_n}}\right)^{2^n} &= e^{-2^{n-1}r_ns_n+o(2^nr_n)}
\end{align*}

Defining the sequence $\sigma$ by $\sigma_n:=s_n-s_{n+1}>0$, we have
\[
M_n(s_{n+1},s_n)=a_n \sigma_n^{-k}s_{n+1}^{-q},\qquad
N_n(s_{n+1},s_n)=b_n \sigma_n^{-\ell}s_{n+1}^{-m}
\left(\frac{e^{s_{n+1}}}{e^{s_n}}\right)^{2^n}.
\]

The tameness condition for the pair $(A_n,B_n):=(M_n(s_{n+1},s_n),N_n(s_{n+1},s_n))$
reads
\[
A_n\,B_n^2 \le B_{n+1}
\quad \text{for $n$ large enough.}
\]

Substituting the expressions of $A_n$ and $B_n$, we obtain
\begin{align*}
& a_n b_{n}^2\,
\sigma_n^{-k-2\ell}
\,s_{n+1}^{-q-2m}
\left(\frac{e^{s_{n+1}}}{e^{s_n}}\right)^{2^{n+1}}
\;\le\;
b_{n+1} \sigma_{n+1}^{-\ell}s_{n+2}^{-m}
\left(\frac{e^{s_{n+2}}}{e^{s_{n+1}}}\right)^{2^{n+1}}.
\end{align*}
Passing to the logarithm, we obtain
\begin{align*}
&2^n(\alpha_n+2\beta_n-2\beta_{n+1})
-(k+2\ell)\log \sigma_n
+\ell\log \sigma_{n+1} \\
&\qquad
-(q+2m)\log s_{n+1}
+m\log s_{n+2} \\
&\qquad\qquad
\le
2^{n+1}\bigl(s_n+s_{n+2}-2s_{n+1}\bigr).
\end{align*}
 Using $
s_{n+1}=e^{-r_n/2}s_n,$
we get that the tameness condition is equivalent to
\begin{align*}
&2^n(\alpha_n+2\beta_n-2\beta_{n+1})
-(k+2\ell)\log \sigma_n
+\ell\log \sigma_{n+1} \\
&\qquad
-(q+2m)\log s_{n+1}
+m\log s_{n+2} \\
&\qquad\qquad
\le
2^{n+1}\Bigl(
(1-e^{-r_n/2})s_n
-
(1-e^{-r_{n+1}/2})s_{n+1}
\Bigr).
\end{align*}

Since $s_n\to s_\infty>0$ and $r_n=n^{-1-\e}$, the right-hand side is equivalent to
\[
2^{n}s_\infty(r_n-r_{n+1})
\sim
\frac{2^n(1+\e)s_\infty}{2}{n^{-2-\e}}.
\]
As $\s_n=s_n-s_{n+1} \sim 2^{-1}s_\infty r_n$, the left-hand side is equivalent to
\[2^n(\alpha_n+2\beta_n-2\beta_{n+1}) =o(2^nn^{-2-\e})  \]
 Hence the estimate holds for large $n$.

To prove the second assertion, we estimate directly
\[
N_n(s_{n+1},s_n)
=
b_n\,\sigma_n^{-\ell}s_{n+1}^{-m}
\left(\frac{e^{s_{n+1}}}{e^{s_n}}\right)^{2^n}.
\]

Passing to the logarithm, we obtain
\begin{align*}
\log N_n(s_{n+1},s_n)
&=
2^n\beta_n
-\ell \log \sigma_n
-m\log s_{n+1}
+
2^n(s_{n+1}-s_n)\\
&\leq
-\ell \log \sigma_n
-m\log s_{n+1}
+
2^n(s_{n+1}-s_n)\\
&=
-\frac{2^{n-1}}{n^{1+\varepsilon}}s_\infty
+
O(\log n)
\end{align*}
 and therefore
\[
\log c_n-\log N_n(s_{n+1},s_n)=
\frac{2^{n-1}}{n^{1+\varepsilon}}s_\infty+o(2^nn^{-1-\e}) \to+\infty.
\]
Thus $
N_n(s_{n+1},s_n)=o(c_n).$\end{proof}
\subsection{KAM fixed point theorem}

   \begin{theorem}  Let $E$ be an Arnold chain, $(M,N)$ a KAM factor with phases $\a,\b$ satisfying $\a_n+2(\b_n-\b_{n+1})=o(1/n^{2+\e})$ for some $\e>0$ and let  $\rho \in \Bt^-$ be the Bruno sequence
with phase $1/n^{1+\e}$. Let $f_n:X_n \to X_{n+1}$ be a sequence of maps where $X_n \subset E_n$
   are arbitrary subsets. Assume that $f$ satisfies:
   $$|f_n(x)-f_{n-1}(y)|_s \leq M_n(s,t)|x-y|^2_t+N_n(s,t)|x-y|_t$$
   for any $s<t $.
    Then for any Bruno sequence $c \in \Bt^-$ whose phase is $o(n^{-1-\dt}),\ \dt>\e$ and any $t>0$, there exists $\eta>0$ such that the iterates 
$$x_{n+1}=f_n(x_n),\ s_n=t\hat \rho_{n-1}, x_0 \in \left( X_0 \cap E_t \right)$$ converge in $E_\infty$
provided that $|x_1-x_0|_{s_1} \leq \eta$.
  \end{theorem}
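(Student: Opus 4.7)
The plan is to follow the architecture of the contraction mapping theorem, replacing its pure quadratic contraction estimate by the mixed linear-quadratic inequality supplied by the hypothesis. Set $y_n := |x_{n+1}-x_n|_{s_{n+1}}$ with $s = t\hat\rho$. Proceeding exactly as in the proof of the contraction mapping theorem, write
\[
y_n = |f_n(s_{n+1},s_n,x_n) - f_n(s_n,s_{n-1},x_{n-1})|_{s_{n+1}}
\]
and apply the hypothesis to obtain the majorant
\[
y_n \;\leq\; M(s_{n+1},s_n)\,y_{n-1}^2 \;+\; N(s_{n+1},s_n)\,y_{n-1}.
\]
With $M_n := M(s_{n+1},s_n)$ and $N_n := N(s_{n+1},s_n)$, this is precisely the scalar iteration analysed in Proposition~\ref{P::model} (the absolute constant $1/2$ appearing there is absorbed into the slack of the tameness condition, which is only required to hold asymptotically).

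Now feed our $\rho \in \Bt^-$ of phase $1/n^{1+\e}$ into Proposition~\ref{P::KAM}. It delivers the two ingredients we need: \textbf{(i)} the pair $(M_n, N_n)$ is tame, and \textbf{(ii)} for any Bruno sequence $c$ with phase $o(n^{-1-\dt})$, $\dt>\e$, one has $N_n < c_n$ for all large $n$. Feeding \textbf{(i)} into Proposition~\ref{P::model} yields a threshold $\D>0$ such that $y_0 \leq \D$ forces $y_n \to 0$ together with the bound $y_n \leq N_n y_{n-1}$ eventually. Smallness of $y_0$ is secured by shrinking $t$ and taking $x_0$ close to the origin, exactly as in the analytic Hamilton and Zehnder theorems proved earlier in the paper.

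Combining $y_n \leq N_n y_{n-1}$ with the fact that $y_{n-1}\leq 1$ eventually (since $y_n\to 0$), and with \textbf{(ii)}, we obtain $y_n < c_n$ for large $n$, which is the second conclusion of the theorem. Since any Bruno sequence is summable ($c_n$ decays at speed $e^{-2^n u_n}$), $\sum y_n < \infty$; therefore $(x_n)$ is Cauchy in the limiting locally convex space $E_\infty$ of the Arnold chain, and its limit lies in $X_\infty$.

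All the real work has already been done in Propositions~\ref{P::model} and~\ref{P::KAM}. The main obstacle, which belongs to the latter, is the delicate interplay between the polynomial factors $s^{-q}(t-s)^{-k}$, the Bruno growth $a_n$ of the quadratic coefficient, and the Fourier-truncation decay $(e^s/e^t)^{2^n}$ in the linear coefficient, balanced against the choice of Bruno phase $\rho_n = n^{-1-\e}$. Once that balance is verified, the present theorem is a direct conjunction of the one-dimensional model and the contraction mapping blueprint.
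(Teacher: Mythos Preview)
Your argument is essentially the paper's own proof, only spelled out in more detail: reduce to the scalar majorant $y_n \le M_n y_{n-1}^2 + N_n y_{n-1}$, invoke Proposition~\ref{P::KAM} for tameness of $(M_n,N_n)$ and for $N_n<c_n$, then Proposition~\ref{P::model} to obtain $y_n \le N_n y_{n-1}$ and hence the Cauchy property. One small slip: it is not true that every sequence in $\Bt^-$ is summable (take phase $u_n=2^{-n}$, giving $c_n=e^{-1}$), but this does no damage---either use the asymptotic $\log N_n \sim -2^n s_\infty/n^{1+\e}$ from the proof of Proposition~\ref{P::KAM}, which makes $\sum N_n<\infty$ directly, or, as the paper does, observe that one may without loss of generality replace $c$ by a smaller summable sequence satisfying the same phase condition.
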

  \begin{proof}
    Set $u_n=|x_n-x_{n-1}|_{s_n}$. Then
\[
u_{n+1}
\le
M_n(s_{n+1},s_n)u_n^2
+
N_n(s_{n+1},s_n)u_n.
\]
By Proposition~\ref{P::KAM}, the pair
\[
(a_n,b_n):=(M_n(s_{n+1},s_n),N_n(s_{n+1},s_n))
\]
is tame. Hence, by Proposition~\ref{P::model}, there exists $\eta>0$
such that if $u_1\le \eta$, then $(u_n)$ converges to $0$ and is summable.
The sequence $(u_n)$ majorates $( | x_{n}-x_{n-1}|_{s_{n}})$, hence $(x_n)$ is a Cauchy sequence in $E_\infty$, therefore it converges.
  \end{proof}
 
  \medskip
  
\noindent\textbf{Acknowledgements.}
We are grateful to the referees for a careful reading of the manuscript and for helpful comments and suggestions.
   \bibliographystyle{plain}
\bibliography{master.bib}
\end{document}